\begin{document}
\newtheorem{conjecture}{Conjecture}
\newtheorem{proposition}{Proposition}
\newtheorem{lemma}{Lemma}
\title{Primes of the Form $m^2+1$ and Goldbach's `Other Other' Conjecture}
\author{Jon Grantham \\
\normalsize Institute for Defense Analyses,
Center for Computing Sciences \\ 
\normalsize Bowie, Maryland 20715, 
E-mail: grantham@super.org
\and
Hester Graves \\
\normalsize Institute for Defense Analyses,
Center for Computing Sciences \\ 
\normalsize Bowie, Maryland 20715, 
E-mail:  hkgrave@super.org}
\date{\today}

\maketitle
\abstract{We compute all primes up to $6.25\times 10^{28}$ of the form $m^2+1$. 
Calculations using this list verify, up to our bound, a less famous conjecture of Goldbach. 
We introduce `Goldbach champions' as part of the verification process
and prove conditional results about them, assuming either Schinzel's Hypothesis H or the Bateman-Horn Conjecture.}

\vskip 20pt\noindent
2020 AMS Subject Class.:  Primary: 11N32, Secondary: 11P32
\vskip 3pt\noindent
Keywords:  Goldbach, primes, sums of squares, Bateman-Horn, Hypothesis H, sieve of Eratosthenes
\vskip 10pt\noindent

\thispagestyle{empty}

\section{Introduction}

Goldbach's most famous conjecture, `Goldbach's conjecture,' is that every even integer greater than or equal to four is the sum of two primes.  
%Many mathematicians accept that Harald Helfgott proved `Goldbach's weak conjecture,' that every odd integer greater than or equal to seven is 
%the sum of three primes.  
He also conjectured erroneously that every odd composite number $n$ can be written as $p + x^2$, where $p$ is prime; 
Project Euler \cite{projecteuler} terms this `Goldbach's other conjecture.' The two known counterexamples are $5777$ and $5993$
\cite{hodges}.
Here, we study `Goldbach's other other conjecture.'

In an October 1, 1742 letter to Euler, Goldbach \cite{goldbach} conjectured:
\begin{conjecture}\label{original}{(Goldbach's Other Other Conjecture)} Let $A$ be the set of positive integers for which $a^2+1$ is prime. 
All $a>1$ in $A$ can be expressed as $b+c$, for some $b,c\in A$.
\end{conjecture}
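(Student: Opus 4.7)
The conjecture is famously open, so I cannot sketch an unconditional proof; instead I would mirror the paper's aim and \emph{verify} the conjecture for every $a$ with $a^2+1\le 6.25\times 10^{28}$. Let
\[
A_N := \{\,1 \le a \le N : a^2+1 \text{ is prime}\,\},\qquad N = \lfloor\sqrt{6.25\times 10^{28}}\rfloor = 2.5\times 10^{14}.
\]
Since any decomposition $a=b+c$ with $b,c\in A$ has $b,c<a\le N$, it suffices to search for witnesses inside $A_N$.

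Phase 1 is to produce $A_N$. Trial-dividing each $m^2+1$ is infeasible, so I would sieve $[1,N]$ in segments against the polynomial $m^2+1$. For an odd prime $p$, $p\mid m^2+1$ iff $m\equiv\pm r_p\pmod p$ for a square root $r_p$ of $-1$, which exists precisely when $p\equiv 1\pmod 4$. For every such $p$ up to a chosen trial-division bound, strike off the two residue classes $\pm r_p\pmod p$ in the current block; surviving $m$ then pass to Baillie--PSW, and are admitted to $A_N$ after (if certification is wanted) a deterministic primality proof on $m^2+1$.

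Phase 2 is the decomposition check. With $A_N$ stored both as a sorted array and as a hash set (or packed bit-vector), for each $a\in A_N$ with $a>1$ I iterate $b\in A_N\cap[1,a/2]$ and test $a-b\in A_N$ in $O(1)$; the first hit certifies $a$, while absence of any hit would yield a counterexample. A mild speedup: for $m>1$ with $m^2+1$ prime, $m^2+1$ must be odd, so $m$ is even---hence every $a\in A_N$ with $a>1$ is even, and the search can be restricted to even $b$.

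The main obstacle is the scale of Phase 1. The Bateman--Horn Conjecture predicts $|A_N|\sim (C/2)\cdot N/\log N$, on the order of $5\times 10^{12}$; the segmented sieve must stream residue classes efficiently, and the primality tests on survivors dominate runtime. Phase 2 is comparatively cheap, since Bateman--Horn also predicts on the order of $a/(\log a)^2$ valid decompositions of each $a$, making witness discovery fast on average. The delicate values are the \emph{Goldbach champions} hinted at in the abstract, those $a$ with anomalously few representations; they are what drive the conditional analysis in the remainder of the paper.
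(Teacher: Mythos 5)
You correctly recognize that this statement is an open conjecture with no proof available, and that the paper's contribution is a finite verification up to $6.25\times 10^{28}$; your two-phase plan is sound in outline but diverges from the paper's method at both stages. For Phase 1, the paper does not sieve partially and then primality-test survivors: it runs a \emph{complete} triple sieve (Eratosthenes to $B^{1/4}$, then the primes $p\equiv 1\pmod 4$ up to $B^{1/2}$ together with their square roots of $-1$, then striking $m\equiv\pm r_p\pmod p$ for all such $p\le B^{1/2}$), so every composite $m^2+1<B$ is eliminated outright and no Baillie--PSW or certification step is needed; your pipeline works in principle, but deterministically certifying on the order of $5\times 10^{12}$ twenty-nine-digit primes is a far heavier computation than finishing the sieve, and the deterministic small-base Miller--Rabin shortcut does not reach $6.25\times 10^{28}$. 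For Phase 2, your global hash set or bit-vector over $A_N$ is roughly as large as the $30$-terabyte prime list itself and will not fit in memory; the paper instead checks, for each $a_n$, the differences $a_n-a_{n-1},a_n-a_{n-2},\dots$ against a small in-memory table of the small elements of $A$, which is feasible precisely because the first hit occurs at index $j(a_n)\le 125$ throughout the verified range. Your bottom-up search over small $b\in A$ is essentially the dual of this and would also terminate quickly, but the paper's formulation is what produces the statistic $j(a_n)$ and the Goldbach champions that drive the conditional results, whereas your version obscures that bookkeeping. Your observation that every $a>1$ in $A$ is even is correct and is implicitly used by the paper's sieve.
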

 
In 1912, not long after the proof of the Prime Number Theorem, Landau \cite{pintz} described determining
whether the set $A$ is infinite as ``unattackable at the present state of science.'' More than a century later,
the problem still resists all attempts. Accordingly, we present a 
computational algorithm and verification up to a bound.  While explaining heuristics to verify the conjecture, we introduce `Goldbach champions'
in Section \ref{verification}.  

The sequence $A$ is studied in another context. We denote the largest prime factor of an integer $n>1$ by $P(n)$. Pasten
\cite{hector}
recently improved Chowla's 1934 results, showing that $P(n^2+1) \gg \frac{(\log\log n)^2}{\log\log\log n}$. Using this notation,
we restate $|A|=\infty$ as $|\{n:P(n^2+1)=n^2+1\}|=\infty$.

If $m^2 + 1$ is prime, we define $A_m = \{a \leq m: a^2 + 1 \text{ is prime}\}$ and test Conjecture \ref{original} by looking at the differences 
 $\{m -a: a \in A_m\}$.  More precisely, if $|A_m| =n$, we enumerate the 
 elements of $A_x$ as $a_1, \cdots, a_n=m$, where $a_i < a_j$ if $i < j$.  
 We look at the differences 
 $a_n - a_{n-1}, a_n - a_{n-2}, \cdots, a_n - a_1 $ to confirm that $m = a_i + a_j$ for some $1\leq i \leq j \leq n-1$.
 We then ask what is the smallest $i \in [1, n-1]$ such that $a_n - a_{n-i} \in A_m$, and how large $i$ is with respect to $m$.
 We denote this smallest value of $i$ by $j(m)$ and examine it in Section \ref{verification}.  
 In Section \ref{Conditional}, we 
prove results on the values of $j(x)$ and the verification process, one of which is conditional on the Bateman-Horn conjecture and
the others on Schinzel's Hypothesis H.
Appendix A is a table comparing champion values of $j(a_n)$ to $\log n$.

\section{Enumeration of primes of the form $m^2+1$}

%In Volume 5 of this journal, 
%Put this ^ back in when we submit to JCNT, but it doesn't belong in the preprint.
Wolf \cite{wolf} computed the primes $p=m^2+1$, for $p<10^{20}$.
Wolf and Gerbicz \cite{oeis} then published a table up to $10^{25}$.  We extended their table up to $6.25\times 10^{28}$.

Our computation uses three sieves, thereby creating three lists of primes.  
Let $B$ be the upper bound of our eventual list of primes $p=m^2+1$, so that $p <B$.
In Wolf's original article, $B = 10^{20}$.  
The first step uses the Sieve of Erathosthenes to generate the primes up to $B^{1/4}$.
Our second list starts as all positive integers $z < B^{\frac{1}{2}}$, $z \equiv 1 \pmod{4}$.
We then sieve using our first list of primes, so our second list becomes the set of all primes $p < B^{\frac{1}{2}}$, $p \equiv 1 \pmod{4}$.
We also compute the roots of $-1$ modulo $p$ for every prime on the second list, and store them with said primes.

We now use our second list of primes to perform the third sieve, on all positive integers $x \leq B^{1/2}$.  
If $B > x^2 +1 > B^{\frac{1}{2}}$, then $x^2 +1$ is prime if and only if $x$ is not a square root of $-1$ modulo any of the primes in the second list.
We therefore use the second list of primes, with the accompanying list of square roots of $-1$, to list the primes of the form $m^2 +1$.

From \cite{cp}, p.\ 121, the number of operations to sieve an array of length $A$ with the primes up to $P$ is 
\begin{equation}O(A\log\log{P}+P^{1/2}/\log{P}).\label{eq:cranpom}\end{equation}
The first summand represents the required sieve updates, and is mostly determined by the size of the array. 
The second term represents the per-prime work to find the sieve starting location, and depends on the number of primes. 
For longer arrays, the first term dominates.  For shorter arrays, the second does.
 
First, we consider our algorithm's computational complexity, assuming the entire sieve array fits into memory.  
We see later that our real-world conditions are more complicated,
but it makes sense to start off analyzing the triple sieve's computational complexity without adding the extra hardware restrictions.
The first sieve (of Eratosthenes) (up to $B^{1/4}$) takes $O(B^{1/4}\log\log{B})$ operations, and the second sieve (up to $B^{1/2}$) takes $O(B^{1/2}\log\log{B})$ operations.
Computing the roots of $-1$ requires computing $2^{(p-1)/4}\bmod p$ for $O(B^{1/2}/\log{B})$ primes. Each exponentiation takes $O(\log{B})$ operations, so the total work for computing the roots is $O(B^{1/2})$.
Thus the entirety of the work done before embarking on the third sieve is $O(B^{1/2}\log \log B)$.

The third sieve is somewhat unusual in that both the sieve array length and the size of the largest prime is about $B^{1/2}$.
The sieve length is the same as in the second sieve, potentially surprising some readers. This is because the elements of $A$ are
the square roots of one less than the primes.
 The number of operations is $O(B^{1/2}\log\log{B}+B^{1/4}/\log{B})$, 
which is again $O(B^{1/2}\log\log{B})$. Therefore, the overall running time of our triple sieve is $O(B^{1/2}\log\log{B})$.

We were ambitious and decided to find all primes $p$ of the form $m^2 +1$ with $p < 6.25 \times 10^{28}$ (see section \ref{verification}).
Unfortunately, a single sieve with $B =6.25 \times 10^{28}$ would require tens of terabytes of memory to store the two arrays for the second and third sieves, which would be infeasible on virtually all modern machines.
Let $M$ denote the maximum length of a sieve array that can fit into memory. Then we require $\frac{B^{1/2}}M$ instances of the second and third sieves, and find ourselves very familiar with our file system. 

The second sieve is essentially a Sieve of Eratosthenes. 
In the regular Sieve of Eratosthenes, we easily save a factor of two on memory by skipping even numbers; here we save a factor of four by also skipping numbers that are $3$ mod $4$. 
We load our length-$M$ section of our sieve array, and then we sieve by the primes in our first list, i.e., the primes that are $\leq B^{1/4}$.  
For each of the primes $p$ in our first list, we need to find where the first multiple of $p$ is in our array of length $M$, before we proceed to sieve by $p$; that is an easy modular reduction.
The totality of these reductions is the second term in Equation (\ref{eq:cranpom}).  
Once we have finished sieving our array of length $M$, we save all of the primes congruent to $1 \pmod{4}$ that we have found to our second list, and compute the roots of $-1$ modulo these new, saved primes.  
We then clear our memory and load  our next sieve array of length $M$. 
Note that $M > B^{\frac{1}{4}}$, so we do not need to load in our sieving primes; we only need to load segments of the list that we sieve.

If we use Equation (\ref{eq:cranpom}) to determine 
the time it takes to sieve all of the $\frac{B^{1/2}}{M}$ instances,
we get 
\[O\left (\frac{B^{1/2}}{M}(M \log \log B + B^{1/4}/ \log B )\right ) = O(B^{1/2} \log \log B + B^{3/4}/(M \log B)).\]
The first term in our equation continues to dominate, unless the memory available for
the sieve area drops to $O(\frac{B^{1/4}}{ \log {B} \log\log{B}})$. 
In practice, the sieve area never gets that small.  With our chosen value of $B=6.25\times 10^{28}$, $\frac{B^{1/4}}{\log{B}\log\log{B}}$ is less than sixty thousand. A cursory examination of the constants involved shows that we certainly 
would not fill up multi-gigabyte machines.

Implementing the third sieve is trickier.  
Again, we load our third sieve's array of length $M$.  This time we sieve by our second list of primes, which is much longer than our first list of primes, and the 
list of primes that we are sieving by does not itself fit in our memory.  We therefore load the second list of primes sequentially from a series of files, and  sieve from the loaded list.  
Note that every prime is loaded once, but this action is so much smaller than the number of times we sieve with a given prime that the file loading work is lost in the noise.

It is also trickier to sieve by any given $p$ in our second list of primes.  
We still need to find the first multiple of our prime $p$ in the array of length $M$, but we are actually sieving by the associated roots $ \pm r$ of $-1$.  
For each root $\pm r$, there may exist some $x$ in our array of length $M$ that is less than the array's first multiple of $p$  such that $x \equiv  \pm r$.
We spend much more time on the per-prime computations
(with respect to sieve updates) than we did in the second sieve.
In total, we do up to four operations per prime before we start sieving with it.
That is, however, only a constant multiple and does not affect the asymptotics.  

Equation (\ref{eq:cranpom}) shows that we lose efficiency when the sieve array size drops below $O(\frac{B^{1/4}}{\log{B}\log\log{B}})$ --- in other 
words, when the number of instances exceeds $O(\log{B}\log\log{B})$. 
We used $9000$ instances, and as $B = 6.25 \times 10^{28}$,
one would assume from the given asymptotics that loading our sieve arrays was efficient.
We did not, however, do a detailed analysis involving constants, and we suspect we were either near or past the point at which we lose efficiency. 
%I checked the logBloglogB calculations several times in PARI, and I got 278, not 42 -- Jon. Yup, you are right --Hester.
As we did not have larger-memory machines available to us at the time, we had no choice but to accept any such loss.

Sample code is available at \url{https://github.com/31and8191/Goldbach1}.

\section{Computational Results}\label{section:computation}

We use Wolf's notation that $\pi_q(x)$ is the number of primes of the form $m^2+1$ up to $x$.

As Wolf notes, Hardy and Littlewood's Conjecture E \cite{hardyl} gives $\pi_q(x)\sim f(x)$, where $f(x) =C_q\frac{\sqrt{x}}{\log{x}}$ and $C_q=1.3728\dots$.  
More precise heuristics give $\pi_q(x)\sim g(x)$, with $g(x) =\frac{C_q}2 \operatorname{li}(\sqrt{x})$. 
In his Table I, Wolf computed the values  of $\pi_q(x)$, $f(x)$, $\pi_q(x)/f(x)$, $g(x)$, and $\pi_q(x)/g(x)$ for $x = 10^a$, where $a$ ranges from $6$ to $20$.  Wolf and Gerbicz  \cite{oeis} then computed the appropriate values for $\pi_q(x)$ when $a$ ranges from $21$ to $25$.  
We repeat {\bf and extend} their results in Table 1.
\begin{table}[h!]
\centering
\begin{tabular}{|| r | r | l |l||}
\hline
$x$ & $\pi_q(x)$ & $\pi_q(x)/f(x)$ & $\pi_q(x)/g(x)$ \\
\hline
$10^{1}$ & $2$ & $1.06080$ & $1.20841$ \\
$10^{2}$ & $4$ & $1.34181$ &  $0.92957$ \\
$10^{3}$ & $10$ & $1.59120$ &  $1.07127$ \\
$10^{4}$ & $19$ & $1.27472$ & $0.91567$ \\
$10^{5}$ & $51$ & $1.35252$ &  $1.04253$ \\
        $10^{6}$ & $112$ & $1.12713$ & $0.91869$ \\
        $10^{7}$ & $316$ & $1.17325$ & $0.99440$ \\
        $10^{8}$ & $841$ & $1.12847$ & $0.98321$ \\
        $10^{9}$ & $2378$ & $1.13516$ & $1.00888$ \\
        $10^{10}$ & $6656$ & $1.11639$ & $1.00696$ \\
        $10^{11}$ & $18822$ & $1.09815$ & $1.00184$ \\
        $10^{12}$ & $54110$ & $1.08909$ & $1.00258$ \\
        $10^{13}$ & $156081$ & $1.07621$ & $0.99805$ \\
        $10^{14}$ & $456362$ & $1.07162$ & $0.99991$ \\
        $10^{15}$ & $1339875$ & $1.06601$ & $0.99984$ \\
        $10^{16}$ & $3954181$ & $1.06116$ & $0.99974$ \\
        $10^{17}$ & $11726896$ & $1.05739$ & $1.00005$ \\
        $10^{18}$ & $34900213$ & $1.05367$ & $0.99991$ \\
        $10^{19}$ & $104248948$ & $1.05058$ & $0.99997$ \\
        $10^{20}$ & $312357934$ & $1.04782$ & $1.00001$ \\
$10^{21}$ & $938457801$ & $1.04529$ & $0.999996$ \\
$10^{22}$ & $2826683630$ & $1.04305$ & $1.000005$ \\
$10^{23}$ & $8533327397$ & $1.04100$ & $0.999998$\\
$10^{24}$ & $25814570672$ & $1.03915$ & $1.000008$ \\
$10^{25}$ & $78239402726$ & $1.03746$ &  $1.000004$ \\
$10^{26}$ & $237542444180$  & $1.03590$ & $1.000003$ \\
$10^{27}$ & $722354138859$ & $1.03447$ &  $1.00000003$ \\
$10^{28}$ & $2199894223892$ & $1.03315$ & $1.00000019$ \\
$6.25\times 10^{28}$ & $5342656862803$ & $1.03217$ & $0.99999976$ \\
\hline
\end{tabular}
\caption{Prime counts}
\label{pi_q}
\end{table}

\section{Verifying Goldbach's Other Other Conjecture} \label{verification}
We confirmed Goldbach's other other conjecture up to $6.25\times 10^{28}$,
i.e., for $a$ up to $2.5 \times 10^{14}$. The list of primes takes up
more than 30 terabytes on disk --- it would be challenging to search through that whole list for each prime to find a difference in our set.

Instead, we asked the following naive questions, and used them to guide our simple verification strategy.
Let $A$ be the set of all $a$ such that $a^2 +1$ is prime and let us enumerate them in order, so $A = \{a_n\}$.
The sets $A_m$ in the introduction are truncations of $A$.
%We confirm the two formulas' correspondence by noting that 
%\frac{C_q \sqrt{a_n}}{\log a_n} = \frac{C_q \frac{2n \log n}{C_q}}{\log (n^2 \cdot \frac{(\log n)^2 4}{C_q})} 
%=\frac{2n\log n}{2 \log n + \log (4/C_q^2) + 2 \log \log n} \sim n.
\begin{itemize}
\item Is $a_n-a_{n-1}=a_i$ for some $i$? 
\item How about $a_n-a_{n-2}$? 
\item How far back do you have to go?
\end{itemize}
To tackle these questions, note that Section \ref{section:computation}'s claim that $\pi_q(x) \sim C_q \frac{\sqrt{x}}{\log x}$ is equivalent to saying 
$a_n \sim \frac{2}{C_q} n \log \frac{2n}{C_q}$.

Let $j(a_n)$ be the smallest value of $i$ such that $a_n-a_{n-i}=a_k$ for some $k$. We call $a_n$ a \textbf{Goldbach champion} if $j(a_i)<j(a_n)$ for all $i<n$. The appendix contains a list of all champions for $a_n<2.5\times 10^{14}$.

\section{Conditional results about the growth of $j(n)$}\label{Conditional}

Popular conjectures about prime values of polynomials imply interesting patterns in the distribution of the sequence $a_n$.

\begin{conjecture}{(Schinzel's Hypothesis H \cite{hyph})}
A set of polynomials $f_i(x)$ satisfies the Bunyakovsky condition if there is no $p$ for which $\prod f_i(a)\equiv 0$ for all $a\in\mathbb{F}_p$. 
Under this assumption, the polynomials are simultaneously prime for infinitely many values of $x$.
\end{conjecture}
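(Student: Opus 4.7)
The plan is to attempt a sieve-theoretic proof, adapting the machinery that succeeds for linear families (Dirichlet's theorem, its modern refinements, and the Green--Tao theorem) to polynomial families of arbitrary degree. First I would formalize the Bunyakovsky condition as the statement that the local densities $\omega(p) := \#\{a \pmod p : \prod_i f_i(a) \equiv 0\}$ satisfy $\omega(p) < p$ for every prime $p$, which is exactly what is needed to make the conjectured singular series
\[
\mathfrak{S}(\{f_i\}) := \prod_p \frac{1 - \omega(p)/p}{(1 - 1/p)^k}
\]
converge to a strictly positive real number, where $k$ is the number of polynomials. The natural target is the Bateman--Horn asymptotic
\[
\#\{x \leq N : f_1(x), \ldots, f_k(x) \text{ all prime}\} \sim \frac{\mathfrak{S}(\{f_i\})}{\prod_i \deg f_i} \cdot \frac{N}{(\log N)^k},
\]
of which merely knowing that the left side tends to infinity would suffice for Hypothesis H.

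Next I would set up a weighted sieve of Selberg or Rosser--Iwaniec type, sieving the values $\prod_i f_i(x)$ for $x \leq N$ by primes up to a level $N^{\theta}$. The upper bound of Bateman--Horn size follows comparatively easily from Selberg's upper-bound sieve; the real difficulty is the matching lower bound, which requires showing that contributions from large prime factors do not swamp the main term. This in turn demands strong equidistribution of $f_i$-values across residue classes to moduli up to a substantial power of $N$, in the spirit of Bombieri--Vinogradov but for polynomial sequences, together with a mechanism for combining the resulting main terms across all $f_i$ simultaneously.

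The main obstacle, and the one that has kept this conjecture open since Bunyakovsky's 1857 formulation of the one-polynomial case and Schinzel's 1958 generalization, is the \emph{parity problem}: purely sieve-theoretic arguments are insensitive to the parity of $\Omega(n)$ and so cannot by themselves isolate primes from products of an odd number of primes. Breaking parity has so far been possible only in special situations exploiting extra algebraic structure --- Friedlander--Iwaniec for $a^2 + b^4$ and Heath--Brown for $a^3 + 2b^3$ being paradigmatic --- and no such structure is available for a generic Bunyakovsky family. Even the one-polynomial instance $f(x) = x^2 + 1$, precisely the case directly relevant to this paper, remains open as Landau's problem and is still, as the introduction quotes Landau, ``unattackable at the present state of science.'' I would accordingly expect this plan to yield an upper bound of the right order of magnitude but no unconditional lower bound, which is why the paper -- correctly -- adopts Hypothesis H as an assumption rather than a theorem.
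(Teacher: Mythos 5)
This statement is Schinzel's Hypothesis H itself, which the paper states as a named open conjecture (citing Schinzel--Sierpi\'nski) and then uses purely as a hypothesis in its conditional propositions; the paper neither offers nor needs a proof of it. Your proposal correctly recognizes this: you sketch the standard sieve-theoretic attack, accurately identify the parity problem as the obstruction that stops the lower bound, and conclude that the statement must be taken as an assumption --- which is exactly the paper's stance. There is nothing to reconcile; just be aware that what you have written is a (sound) explanation of why no proof is currently possible, not a proof, and that even the single-polynomial case $x^2+1$ relevant here is Landau's open problem. One small point worth tightening if you ever write this up: the Bunyakovsky condition as usually stated also requires the $f_i$ to be irreducible with positive leading coefficients, not merely that $\omega(p)<p$ for all $p$; the paper's informal statement elides this as well.
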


\begin{proposition}
Assuming Hypothesis H, $j(a_n)>1$ infinitely often.
\end{proposition}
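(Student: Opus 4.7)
The plan is to exhibit, under Hypothesis H, infinitely many consecutive pairs $(a_{n-1}, a_n)$ of elements of $A$ whose gap equals $8$. Since $8^2 + 1 = 65$ is composite, $8 \notin A$, so for every such $n$ we have $a_n - a_{n-1} \notin A$, which means $j(a_n) \geq 2$, and this suffices.

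To produce such a pair, I will look for even $m$ with $m^2 + 1$ and $(m+8)^2 + 1$ both prime and $(m+i)^2 + 1$ composite for $i = 1, \ldots, 7$. When $m$ is even and greater than $2$, the four odd offsets $i = 1, 3, 5, 7$ are automatic, since $(m+i)^2 + 1$ is then an even integer exceeding $2$. I handle the three even interior offsets by imposing $m \equiv 56 \pmod{65}$, i.e.\ $m \equiv 1 \pmod 5$ and $m \equiv 4 \pmod{13}$. This makes $(m+2)^2 + 1$ and $(m+6)^2 + 1$ divisible by $5$ (since $m+2$ and $m+6$ reduce to $\pm 2 \pmod 5$, the square roots of $-1$), and makes $(m+4)^2 + 1$ divisible by $13$ (since $m+4 \equiv 8 \equiv -5 \pmod{13}$). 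For sufficiently large $m$, each such value exceeds its small prime divisor and is therefore composite.

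I then apply Hypothesis H to the pair $f_1(y) = (65y + 56)^2 + 1$ and $f_2(y) = (65y + 64)^2 + 1$, corresponding to $m = 65y + 56$ and $m + 8 = 65y + 64$. Verifying the Bunyakovsky condition reduces to checking small primes: modulo $5$ the polynomials are the nonzero constants $2$ and $2$ (since $56 \equiv 1$ and $64 \equiv 4$ are not square roots of $-1$); modulo $13$ they are $4$ and $2$ (since $56 \equiv 4$ and $64 \equiv 12$ are likewise not square roots of $-1$); at $p = 2$ and $p = 3$ an evaluation at $y=0$ shows the product does not vanish identically; and for every prime $p \geq 5$ the product has degree $4 < p$, so some residue escapes both polynomials. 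Hypothesis H then produces infinitely many $y$ for which $f_1(y)$ and $f_2(y)$ are both prime, giving the required infinitude of $n$ with $j(a_n) > 1$. The main delicacy is arranging a single residue class modulo $65$ that both forces the three interior even positions to be composite \emph{and} avoids making $m^2 + 1$ or $(m+8)^2 + 1$ themselves divisible by $5$ or $13$ (which would kill Bunyakovsky and make primality impossible); the class $56 \pmod{65}$ threads this needle, whereas several other natural choices do not.
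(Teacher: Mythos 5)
Your proof is correct and follows essentially the same strategy as the paper's: choose a residue class modulo $65$ so that the three even intermediate offsets are killed by divisibility by $5$ or $13$, apply Hypothesis H to the resulting pair of quadratics, and note that the gap $8$ is not in $A$. The paper uses the class $1 \pmod{65}$ (polynomials $(65y+1)^2+1$ and $(65y+9)^2+1$) where you use $56 \pmod{65}$, an immaterial difference; you are merely more explicit about the odd offsets and the Bunyakovsky verification.
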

\begin{proof}
Let $f_1(y)=(65y+9)^2+1$ and $f_2(y)=(65y+1)^2+1$.
Since each polynomial has at most $2$ roots, $f_1(a) f_2(a)$ cannot be $0$ for all $a \in \mathbb{F}_p$ when $p \geq 5$.
It is easy to check $f_1(a) f_2(a)$ is not always $0$ for all $a \in \mathbb{F}_p$ when $p$ is either $2$ or $3$.
Our set therefore satisfies the Bunyakovsky condition, and thus the two functions will be simultaneously prime infinitely often, assuming Hypothesis H.
To see that they are {\bf consecutive} primes of the form $x^2+1$, look at the intermediate values.
\begin{align*}
    (65y+3)^2+1 &\equiv 0 \pmod{5} \\
    (65y+5)^2+1 &\equiv 0 \pmod{13} \\
    (65y+7)^2+1 &\equiv 0 \pmod{5}
\end{align*}

The difference $(65y+9)-(65y+1)=8$ is not in $A$, so $j(a_n)>1$ infinitely often.
\end{proof}

We can, in fact, prove a much stronger result if we assume the Bateman-Horn Conjecture \cite[p. 363]{BH}.

The Bateman-Horn Conjecture states that the number of values less than $x$, for which a set of $k$ polynomials satisfying the Bunyakovsky condition is simultaneously prime, 
is proportional to $\frac x{\log^k(x)}$, and gives the proportionality constant, which we will not use.
The Bateman-Horn conjecture strengthens Hypothesis H.

\begin{lemma}
Given a sequence $\{y_n\}$ of density $0$ and a positive integer $k$, there exists a set $\{b_0, b_1, \ldots, b_{k-1}\}$ such that
the $b_i\not\in\{y_i\}$ and the set of polynomials 
$\{f_i=(x^2-b_i)\}$ satisfies the Bunyakovsky condition.
\end{lemma}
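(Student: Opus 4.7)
The plan is to reduce the Bunyakovsky condition to a congruence condition modulo finitely many primes, and then use the density-zero hypothesis to avoid $\{y_n\}$. First I would observe that the only primes $p$ for which $\prod_{i=0}^{k-1}(a^2 - b_i)$ can vanish identically on $\mathbb{F}_p$ are bounded in terms of $k$: the set of $a \in \mathbb{F}_p$ with $a^2 \in \{b_0, \ldots, b_{k-1}\} \pmod p$ has at most $2k$ elements (two square roots per $b_i$), so it can exhaust $\mathbb{F}_p$ only when $p \le 2k$. Hence only finitely many primes need to be handled by design, while all primes $p > 2k$ are automatically safe.

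Next I would choose the residue of the $b_i$ strategically at each small prime. For an odd prime $p \le 2k$, pick any quadratic non-residue $c_p$ modulo $p$; then no $a \in \mathbb{F}_p$ satisfies $a^2 \equiv c_p \pmod p$, so if every $b_i \equiv c_p \pmod p$ the product $\prod_i(a^2 - b_i)$ is nonzero at every $a$. For $p = 2$ take $c_2 = 1$; if every $b_i$ is odd, then at $a = 0$ the product reduces to $(-1)^k \not\equiv 0 \pmod 2$. Combining these conditions via the Chinese Remainder Theorem, with $P = \prod_{p \le 2k} p$, yields a single residue class $c \pmod P$ whose elements simultaneously satisfy all of the small-prime requirements.

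Finally I would invoke the density-zero assumption. The arithmetic progression $\{c + jP : j \ge 0\}$ has natural density $1/P > 0$, whereas $\{y_n\}$ has density $0$; consequently the progression contains infinitely many integers outside $\{y_n\}$. Choosing any $k$ distinct such integers as $b_0, \ldots, b_{k-1}$, the family $\{x^2 - b_i\}$ satisfies the Bunyakovsky condition at every prime $p \le 2k$ by construction, and at every prime $p > 2k$ by the counting argument above.

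I do not anticipate a serious obstacle: the proof is pigeonhole at large primes, CRT at small primes, and one density comparison at the end. The only mildly delicate point is $p = 2$, where every residue is a quadratic residue and so ``non-residue'' is vacuous; the remedy is to force a common parity on all $b_i$ and check that this kills one of the two elements of $\mathbb{F}_2$.
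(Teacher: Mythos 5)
Your argument is correct, and its skeleton is the same as the paper's: a degree/root-counting bound disposes of all primes $p>2k$, the Chinese Remainder Theorem pins down a single residue class modulo the primes $p\le 2k$, and the density-zero hypothesis lets you pick $k$ members of that positive-density progression avoiding $\{y_n\}$. The one place you diverge is forced by a typo in the statement: you prove the lemma literally for $f_i=x^2-b_i$, which requires the stronger congruence condition that each $b_i$ be a quadratic non-residue modulo every odd $p\le 2k$ (so that each factor is nowhere zero), plus your separate parity fix at $p=2$. The paper's own proof, and the only place the lemma is used, concern the polynomials $f_i(x)=(x-b_i)^2+1$; there it suffices to exhibit a single good point, namely $x=0$, where $\prod_i f_i(0)=\prod_i(b_i^2+1)\equiv\prod_i(b^2+1)\not\equiv 0\pmod p$ once $b$ is chosen by CRT with $b^2+1\not\equiv 0$ for all $p\le 2k$. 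Both arguments are sound for their respective polynomial families; your version is marginally more delicate (existence of non-residues, the $p=2$ case) but buys nothing extra for the application, so if you adopt the paper's reading of the lemma you can simplify to the single-evaluation-point argument.
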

\begin{proof}
By the Chinese Remainder Theorem, there is a $b$ such that for each prime $p\le 2k$, $b^2+1\not\equiv 0\bmod p$.
Because the set of $y_i$'s has zero density and the numbers equivalent to $b$ modulo all small primes has positive density, we can
choose a set of $b_i$'s congruent to $b$ which avoids the sequence $\{y_n\}$.
For primes $p \leq 2k$,  all of the $f_i(0)\not\equiv 0$  modulo said primes, and the condition is satisfied.
The product of the $f_i$'s has degree $2k$, and therefore cannot be identically zero modulo any prime $p > 2k$.
\end{proof}

\begin{proposition}
Assuming the Bateman-Horn Conjecture,  \[\limsup_{n\to\infty} j(a_n)=\infty.\]
\end{proposition}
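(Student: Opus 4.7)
The plan is to show $j(a_n) \geq K$ infinitely often for every fixed positive integer $K$, which yields the proposition. Apply the preceding lemma to the density-zero sequence $\{y_n\} = A$ to obtain $b_0, b_1, \ldots, b_{K-1}$, each outside $A$, such that the polynomials $f_i(x) = (x - b_i)^2 + 1$ satisfy the Bunyakovsky condition. The lemma's CRT step admits the choice $b \equiv 0 \pmod{p}$ for every prime $p \leq 2K$ (since $0^2 + 1 = 1 \not\equiv 0$), and $0 \notin A$ because $1$ is not prime; so after reordering we may arrange $0 = b_0 < b_1 < \cdots < b_{K-1}$, with all $b_i$ for $i \geq 1$ strictly positive and outside $A$.

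Next I invoke the Bateman--Horn conjecture in two directions. First, applied to $\{f_0, \ldots, f_{K-1}\}$, it gives that the number of $x \leq X$ for which all $f_i(x)$ are simultaneously prime is $\sim C \cdot X / \log^K X$ for some positive constant $C$. Second, for each ``intermediate'' integer $c$ in $\{1, 2, \ldots, b_{K-1}-1\} \setminus \{b_1, \ldots, b_{K-1}\}$, adjoining $(x - c)^2 + 1$ to the family yields a system that either violates Bunyakovsky (no solutions) or still satisfies it, in which case Bateman--Horn bounds the count of $x \leq X$ at which every polynomial of the enlarged family is prime by $O(X / \log^{K+1} X)$. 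Summing over the $O(1)$ such $c$'s gives $o(X / \log^K X)$ ``bad'' values, so $\gg X / \log^K X$ ``good'' $x$'s remain for all large $X$.

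For each good $x$, the integers $x, x - b_1, \ldots, x - b_{K-1}$ are precisely the members of $A$ inside $[x - b_{K-1}, x]$. Writing $a_n := x$, the $K - 1$ immediate predecessors in $A$ are $a_{n-i} = x - b_i$, and the differences $a_n - a_{n-i} = b_i$ lie outside $A$ by our construction for $1 \leq i \leq K - 1$; hence $j(a_n) \geq K$. Since infinitely many good $x$'s exist and $K$ was arbitrary, $\limsup_{n \to \infty} j(a_n) = \infty$.

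The main obstacle lies in the second paragraph: controlling intermediate primes requires Bateman--Horn's quantitative asymptotic rather than the mere infinitude supplied by Hypothesis H. Each additional prime condition must shrink the count by a further factor of $\log X$, which is exactly what Bateman--Horn provides, making the finitely many intermediate possibilities collectively negligible against the main $X / \log^K X$ term.
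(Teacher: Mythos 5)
Your proof is correct and follows essentially the same route as the paper's: invoke the lemma to build the family $\{(x-b_i)^2+1\}$ with $b_i\notin A$, use Bateman--Horn's asymptotic as a lower bound for that family and as an upper bound $O(X/\log^{K+1}X)$ for each family enlarged by an intermediate shift $c$, and conclude that most of the $\gg X/\log^K X$ good values give $j(a_n)\ge K$. The only cosmetic difference is that you sum the finitely many intermediate upper bounds directly, where the paper phrases the same estimate as a pigeonhole contradiction; your parenthetical ``no solutions'' when Bunyakovsky fails for an enlarged family should read ``finitely many solutions,'' but this does not affect the argument.
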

\begin{proof}
We demonstrate that for any $k$, there are infinitely many $a_n$ with $j(a_n)\ge k$. 

The preceding lemma shows we can form a sequence $b_0=0,b_1,...,b_{k-1}$ of elements not in $A$ such that the set $\{f_i(m)=(m-b_i)^2+1\}$ satisfies the Bunyakovsky condition. 

Assuming the Bateman-Horn conjecture, there are asymptotically $c \frac x{\log^k(x)}$ values of $m$ less than $x$,  $c >0$, where all of the polynomials take prime values. 
If there are no other values $d$, where $0<d<b_{k-1}$, such that $(m-d)^2+1$ is prime, then we have that $j(m)\ge k$. 

Assume that there are only finitely many $m$ such that there exist no other described $d$. 
Then for all but finitely many $m$, $(m-d)^2+1$ is prime for at least one $d$, with $0<d<b_{k-1}$, that is not equal to any of the $b_i$.
By the pigeonhole principle, at least one of the potential $d$'s creates a $(k+1)$-st polynomial $f_{x+1}(m) = (x-d)^2 +1$ 
such that there are asymptotically $c' \frac{y}{\log^2 y}$ values of $m$ less than $y$, with $c'>0$, such that 
$f_1(m), f_2(m), \ldots , f_{k+1}(m)$ are all simultaneously prime. 
That contradicts the Bateman-Horn conjecture, which says that the set $\{f_i,(m-d)^2+1\}$ can have asymptotic count at most $c'' \frac {y}{\log^{k+1}(y)}$ for some $c'' >0$. 
Therefore there exist infinitely many $m$ such that $j(m) \geq k$.
Our choice of $k$ was arbitrary, so $\limsup_{n \rightarrow \infty} j(a_n) = \infty$.

\end{proof}

\begin{proposition}
Assuming Hypothesis H, $\liminf_{n\to\infty} j(a_n)=1$.
\end{proposition}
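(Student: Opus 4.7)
The plan is to exhibit infinitely many indices $n$ with $j(a_n)=1$; combined with the trivial inequality $j(a_n)\ge 1$, this yields $\liminf_{n\to\infty} j(a_n)=1$. Concretely, I want to produce infinitely many pairs $(a_{n-1},a_n)$ of consecutive elements of $A$ whose difference $a_n-a_{n-1}$ also lies in $A$. The most convenient difference to target is $2$: it belongs to $A$ (since $2^2+1=5$), and any two elements of $A$ differing by $2$ are automatically consecutive in $A$, because the integer between them is odd and so gives an even square-plus-one larger than $2$.

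First, I would apply Hypothesis H to the pair of irreducible polynomials $f_1(x)=x^2+1$ and $f_2(x)=(x+2)^2+1$. The Bunyakovsky condition holds automatically at any prime $p\ge 5$, since $\deg(f_1 f_2)=4<p$, and a quick inspection handles the remaining primes ($x^2+1$ is never $0$ modulo $3$, and $f_1(0)f_2(0)=5$ is odd). Hypothesis H therefore supplies infinitely many positive integers $x$ for which both $x^2+1$ and $(x+2)^2+1$ are prime.

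Next, I would observe that all but at most one such $x$ must be even: an odd $x\ge 3$ makes $x^2+1$ an even number larger than $2$, hence composite. For each such even $x\ge 2$, the intermediate integer $x+1$ is odd, so $(x+1)^2+1$ is composite as well. Thus $x$ and $x+2$ are \emph{consecutive} elements of $A$, their difference $2$ lies in $A$, and setting $a_n=x+2$ gives $j(a_n)=1$.

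No step poses a real obstacle; the only mild subtlety is that Hypothesis H does not, on its face, enforce the parity of $x$ needed to rule out an intermediate element of $A$ between $x$ and $x+2$, but the parity check above is automatic. If one preferred, one could instead apply Hypothesis H to the polynomials $(2y)^2+1$ and $(2y+2)^2+1$, building the parity constraint into the parametrization from the start.
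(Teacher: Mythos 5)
Your proof is correct and follows essentially the same route as the paper: apply Hypothesis H to the pair $x^2+1$, $(x\pm2)^2+1$, use the parity of $x$ to see that the intermediate value $(x+1)^2+1$ is even and hence that the two primes are consecutive in $A$, and conclude that the difference $2\in A$ gives $j(a_n)=1$ infinitely often. Your explicit parity check on $x$ itself is a small point the paper leaves implicit, but the argument is the same.
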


\begin{proof}
Consider the polynomials $x^2+1$ and $(x-2)^2+1$. By the above lemma, they satisfy the Bunyakovsky condition.
By Hypothesis H, there are infinitely many $a_i$ with both $a_i^2+1$ and $(a_i-2)^2+1$ prime. Because $(a_i-1)^2+1$ must be even, $a_{i-1}=a_i-2$, and $a_i-a_{i-1}=2$, which is a
member of our set A.
\end{proof}

In particular, this shows that Goldbach's other other conjecture is true infinitely often.

\section{Further Work}

In a follow-up paper, we generalize Goldbach's other other conjecture to cyclotomic polynomials other than $\Phi_4(x)=x^2+1$. 
We thank Michael Filaseta for noting that Goldbach's other other conjecture is equally plausible when looking at representations of all positive integers, not just primes. 
Our forthcoming paper also explores that intriguing path. 

We did not further explore the function $j(a_n)$.  
While it looks like $j(a_n)$ grows infinitely large, we do not have a growth result for $j(a_n)$.  
Arithmetic statisticians may want to explore the `expected value of $j(a_n)$.'
Hypothesis H implies that $j(a_n)$ is one infinitely often, but there may be a nice formula approximating $j(a_n)$ most of the time.

\section{Acknowledgments}

We would like to thank Franz Lemmermeyer for telling the second author about the conjecture, and for sending her a link to Goldbach's correspondence.
Both authors would like to thank Professor Louise Shelley of George Mason University for her assistance with background research. 

We started this research a decade ago, but we were derailed when the second author was seriously injured.  She would like to thank Drs.\ Bullard-Bates, Etsy, and Potolicchio for their help in returning 
her to health, and our center's director at the time, Dr.\ Francis Sullivan, for supporting her during her recovery.  She would also like to thank Ms.\ Joemese Malloy and the Thurgood Marshall Childhood Development 
Center for giving her the peace of mind to engage in research, knowing that her child is in safe and loving hands, especially during the pandemic.  
Lastly, she thanks her husband, Loren LaLonde, for 
his support through all of these trials and tribulations.

\begin{appendices}
\section{Champion values of $j(a)$}
\begin{tabular}{ r | r | r | r|r}
\hline
$n$ &$a_n$ & $\frac{2}{C_q} n \log \frac{2n}{C_q}$ &  $j(a_n)$ & $\frac{j(a_n)}{\log n}$\\
\hline
$16$&$74$ & $106$& $3$ &$1.08$\\
$55$&$384$ & $507$ & $6$ &$1.50$\\
$100$&$860$ & $1047$ & $7$ &$1.52$\\
$173$&$1614$ & $2011$ & $10$&$1.94$ \\
$654$&$7304$ & $9429$ & $12$ &$1.85$\\
$1188$&$14774$ & $18618$ & $14$& $2.00$ \\
$2815$&$37884$ & $49220$ & $17$ &$2.14$\\
$6868$&$103876$ & $132962$ & $21$ &$2.38$\\
$11913$&$191674$ & $244421$ & $23$ &$2.45$\\
$36533$&$651524$ & $835598$ & $24$ &$2.28$\\
$38073$&$681474$ & $874125$ & $26$ &$2.47$\\
$62688$&$1174484$ & $1504969$ & $38$ &$3.44$\\
$480452$&$10564474$ & $13590903$ & $44$ & $3.63$\\
$837840$&$19164094$ & $24679882$ & $48$ & $3.52$\\
$1286852$&$30294044$ & $39066897$ & $52$ &$3.70$\\
$10451620$&$279973066$ & $363307290$ & $56$ &$3.46$\\
$25218976$&$709924604$ & $923322569$ & $58$ &$3.40$\\
$68826857$&$2043908624$ & $2665142759$ & $64$ &$3.55$\\
$79601233$&$2381625424$ & $3106685030$ & $65$ &$3.57$\\
$157044000$&$4862417304$ & $6353414906$ & $69$ &$3.66$\\
$266774400$&$8476270536$ & $11089804641$ & $70$ & $3.61$\\
$337231328$&$10835743444$ & $14184814636$ & $71$ & $3.62$\\
$1702595832$&$58917940844$ & $77409688313$ & $83$ & $3.90$\\
$2524491445$&$88874251714$ & $116867691886$ & $90$ &$4.16$\\
$3079006270$&$109327832464$ & $143823180284$ & $105$ &$4.81$\\
$63281910377$&$2537400897706$ & $3358032936033$ & $125$ & $5.03$\\
\hline
\end{tabular}
\end{appendices}

\bibliography{goldbach}
\bibliographystyle{alpha}

\end{document}